\documentclass[11pt,american]{amsart}
\usepackage{amsmath,amsfonts,amssymb,amscd,amsthm,amsbsy,bbm,epsf,calc,graphicx,esint,hyperref}
\usepackage{color}
\usepackage{datetime}
\usepackage{mathtools}


\textwidth=6.25truein
\textheight=8.5truein
\hoffset=-0.65truein
\voffset=-.5truein


\numberwithin{equation}{section}

\newcounter{hours}\newcounter{minutes}

\theoremstyle{plain}
\newtheorem{theorem}{Theorem}[section]

\newtheorem{lemma}[theorem]{Lemma}


\theoremstyle{definition}                  
\newtheorem{remark}[theorem]{Remark}




\title{Regularization estimates of the Landau-Coulomb diffusion}

\begin{document}
	
\author{Rene Cabrera, Maria Gualdani, and Nestor Guillen}	
\thanks{ RC  is partially supported by the NSF DMS RTG 1840314. MPG is supported by NSF DMS 2019335 and would like to
thank the Isaac Newton Institute (Cambridge, UK) and NCTS Mathematics Division Taipei for their kind hospitality. NG is supported by NSF DMS 214423.} 
\address{The University of Texas at Austin,
Mathematics Department, 
2515 Speedway Stop C1200,
Austin, Texas 78712-1202.}
\email{gualdani@math.utexas.edu }
\address{The University of Texas at Austin,
Mathematics Department ,
2515 Speedway Stop C1200,
Austin, Texas 78712-1202.}
\email{rene.cabrera@math.utexas.edu}
\address{Texas State University,
Mathematics Department, 
Pickard St., San Marcos, TX 78666.}
\email{nestor@txstate.edu}

\date{\today}

\begin{abstract}

The Landau-Coulomb equation is an important model in plasma physics featuring both nonlinear diffusion and reaction terms. In this manuscript we focus on the diffusion operator within the equation by dropping the potentially nefarious reaction term altogether. We show that the diffusion operator in the Landau-Coulomb equation provides a much stronger $L^1 \to L^\infty$ rate of regularization than its linear counterpart, the Laplace operator.  The result is made possible by a nonlinear functional inequality of Gressman, Krieger, and Strain together with a De Giorgi iteration. This stronger regularization rate illustrates the importance of the nonlinear nature of the diffusion in the analysis of the Landau equation and raises the question of determining whether this rate also happens for the Landau-Coulomb equation itself.

\end{abstract}

\maketitle

\baselineskip=14pt
\pagestyle{headings}		

\markboth{Regularization estimates of the Landau-Coulomb diffusion}{R. Cabrera, M. Gualdani, N. Guillen}

\section{Introduction and main result}\label{Introduction}
 
We consider the nonlinear quadratic equation 
\begin{align}\label{P1}
u_t = \textrm{div}(A[u]\nabla u) ,\quad x\in \mathbb{R}{^d}, \quad t>0,
\end{align}
where $A[u]$ is a $d\times d$ matrix defined as 
$$
A[u](x,t) := c_d \int_{\mathbb{R}{^d} }\frac{\mathbb{P}(x-y)}{|x-y|^{d-2}}u(y,t)\;dy,
$$
and $\mathbb{P}(z)$ is the projection matrix over the space perpendicular to $z$, defined as 
$$
\mathbb{P}(z) :=  \mathbb{Id} - \frac{z \otimes z}{|z|^2},\quad z\in \mathbb{R}^{d}\setminus\{0\},\quad \text{and}\quad \mathbb{Id}\quad \text{identity matrix}
$$ 
Hereafter the dimension $d$ is greater than or equal to $3$. Equation (\ref{P1}) represents the ``diffusive'' part of the homogeneous Landau-Coulomb equation, in divergence form, which reads as 
\begin{align}\label{P2}
u_t = \textrm{div}(A[u]\nabla u - u \;\textrm{div}A[u]) ,
\end{align}
or alternatively in the non-divergence form
\begin{align}\label{P2 nondivergence}
u_t = \textrm{Tr}(A[u] D^2 u ) + u^2. 
\end{align}
The term in (\ref{P2}) that is missing in (\ref{P1}), namely $ -\textrm{div}(u \;\textrm{div}A[u])$,  had long been known to be the term that fights against the the regularization effect from the diffusive term. Accordingly, a complete analysis of the regularization rate for (\ref{P1}) allows for understanding the smoothing effects of this term on its own right, and might provide clues on the regularization rate for the homogeneous Landau-Coulomb equation (\ref{P2}). 

The homogeneous Landau equation (\ref{P2}) is a fundamental equation in kinetic theory, playing the role of the Boltzmann equation when the Coulomb force (as in the case of a plasma) is involved (see for instance the classical reference by Villani \cite{Villani-Handbook02} for a comprehensive introduction). The mathematical analysis of (\ref{P2}) has received considerable attention in the past two decades, and we highlight here (organized by topic) some of the contributions to the study of this equation. The collective investigation for (\ref{P2}) of the past years has advanced forward the knowledge in several directions:  (i) The existence and uniqueness of smooth solutions for short times have been extensively explored. Notably, Golding and Loher's work \cite{GoLo23}  has established the most comprehensive result for initial data in $L^p(\mathbb{R}^3)$ with $p>\frac{3}{2}$.  (ii) Global existence and uniqueness of smooth solutions with initial data close to equilibrium has been addressed across various contexts.  For initial data small in Sobolev spaces, contributions can be found in \cite{Guo02, CM2017verysoft, Desvillettes-He-Jiang-2021} and related references. The first work that considers initial data in $L^\infty$ is the one by  Kim, Guo and Hwang  \cite{KimGuoHwang2016}. Golding, Gualdani and Loher in \cite{GoGuLo23} encompassed the problem in all $L^p$ spaces with $p\ge \frac{3}{2}$. Very recently the case $p\ge1$ was solved in \cite{DGL24} and \cite{Ji24}. (iii) Conditional regularity. This line of research concerns the investigations of conditions that guarantee global well-posedness of solutions for arbitrarily large times. Silvestre \cite{Silvestre2015} and Gualdani, Guillen \cite{GuGu16} showed that if the function $u(x,t) \in  L^p(\mathbb{R}^3)$ with $p>\frac{3}{2}$ uniformly in time, then it is smooth. Recently, Alonso, Bagland, Desvillettes, and Lods \cite{ABDL23} showed that if $u(x,t) \in  L^q(0,T,L^p(\mathbb{R}^3))$ for a certain range of $q$ and $p$, then it is automatically in $L^p(\mathbb{R}^3)$ with $p>\frac{3}{2}$ and therefore smooth. Regarding conditional uniqueness, Fournier in \cite{Fournier2010} showed that solutions which have $L^\infty$ norm integrable in time are unique. Chern and Gualdani \cite{ChGu20} showed that uniqueness holds  in the class of high  integrable  functions.    (iv) Partial regularity. This line of research for the Landau equation started with Golse, Gualdani, Imbert and Vasseur \cite{GGIV2019partial}. In this work it is shown that, if singularities occur, they are concentrated in a time interval that has Hausdorff measure at most $\frac{1}{2}$. Most recently, Golse, Imbert and Vasseur showed that the spatial and temporal domain for singularities to happen has Hausdorff measure $1 + \frac{5}{2}$ \cite{GIV2023}. (v) Study of modified models that pertain the same difficulties of the Landau equation but seem analytically more tractable. This line of research started with the work of Gressmann, Krieger and Strain \cite{KriStr2012,GreKriStr2012} and their analysis of an isotropic version of (\ref{P2}), 
\begin{align}\label{P2_iso}
u_t = a[u] \Delta u + \alpha u^2, \quad \alpha >0. 
\end{align}
In \cite{GreKriStr2012, KriStr2012} they show that (\ref{P2_iso}) is globally well-posed if initial data are radially symmetric and monotonically decreasing and $\alpha \in (0,\frac{74}{75})$. Later, Gualdani and Guillen \cite{GuGu15} proved global well-posedness for $\alpha =1$ also in the case when initial data are radially symmetric and monotonically decreasing. These works where the first to establish that, unlike what happens in the semilinear heat or porous media equations, the nonlinear diffusion $a[u] \Delta u$ is strong enough to overcome the reaction $u^2$. Later, Gualdani and Guillen  \cite{GG21} showed the the isotropic Landau equation with less singular potentials ($\gamma \in (-2.5, -2]$) is also globally well-posed. 

These findings lead us to the motivation behind the present manuscript. The proofs in groups (i), (iii) and (iv) primarily rely on the ellipticity estimates provided by the lower bound of $A[u]$. Specifically, if the function $u$ has mass, second moment and entropy bounded, the matrix $A[u]$ is uniformly bounded from below by
$$
A[u] \ge \frac{c}{1+ |x|^d} \mathbb{Id},  \quad x \in \mathbb{R}{^d},
$$
where $c$ only depends on mass, second moment and entropy of $u$.  While a weighted Laplacian operator is analytically more tractable then the full nonlinear nonlocal diffusion $\textrm{div}(A[u]\nabla u)$, one might argue that by using the lower bound on $A[u]$ we discard an important element that could actually prevent singularities following the intuition that when $u$ is big so is the diffusion coefficient $A[u]$ and this strength could prevent formation of singularities. However, this intuition has been very difficult to apply in practice. Interestingly, however, is the fact that all the global-well-posedness results for general data mentioned above use the full power of the diffusion operator. These include the results in  \cite{GreKriStr2012, KriStr2012}, which are a consequence of a novel weighted Poincare inequality 
$$
\int_{\mathbb{R}^d} u^{p+1}\;dx \le \left( \frac{p+1}{p}\right)^2 \int_{\mathbb{R}^d} A[u] |\nabla u^{\frac{p}{2}}|^2\;dx,
$$ 
the ones in \cite{GuGu15}, which use a geometric argument in which the coefficient $a[u]$ plays a pivotal role, and lastly, the ones in   \cite{GG21}, proven via new weighted Hardy  inequalities  of the form
$$
\int _{\mathbb{R}^d} (u\ast |x|^\gamma)u^p\;dx \le c_{d,\gamma,p} \int_{\mathbb{R}^d} (u\ast |x|^{\gamma+2})|\nabla u^{\frac{p}{2}}|^2\;dx ,\quad \gamma>-d.
$$
Lastly, it was already noted in \cite{GuGu16}  that conditional regularity result for (\ref{P2}) shows a rate of regularization much stronger than what is usually expected for regular parabolic equations. \\

In this manuscript we provide  a new and precise quantification of the regularization power of the Landau diffusion operator. Notably, this regularization exhibits a significantly faster rate than that achieved by the Laplacian operator. 
\begin{theorem}\label{main thm}
    Let $u(t,x)$ be a solution to (\ref{P1}) with initial data $0\le u_{in}$ that belongs to $L^{1}_{m}(\mathbb{R}^{d})$ for some $m>3d(d-2)$ and $d\ge 3$. Then the following estimate holds for any small $\varepsilon>0$ and any times $0<t<T$, 
    \begin{align}\label{main result}
       \|u\|_{L^{\infty}(t, \;T,\; L^{\infty}(\mathbb{R}^{d}))}\leq \frac{c_{\varepsilon}}{t^{1+\varepsilon}},
    \end{align}
Here $c_\varepsilon>0$ is a constant depending only on $\varepsilon$, $d$, and the $L^{1}_{m}$-norm of the initial data $u_{in}$.    
\end{theorem}
Our best bound for the constant $c_\varepsilon$ in (\ref{main result}) is one that goes to infinity as $\varepsilon \to 0+$, so we cannot obtain a rate exactly with $\varepsilon = 0$. Whether an estimate with $\varepsilon=0$ holds is an interesting question but one that might not be possible with our approach. We remind the reader that in contrast to this $t^{-1-\varepsilon}$ regularization rate, the heat equation in $\mathbb{R}^d$ has a (sharp) regularization rate for the $L^\infty$ norm of $t^{-d/2}$. This bound when $d\geq 3$ is much larger than the $t^{-1-\varepsilon}$ rate ($\varepsilon$ small) when $t$ is small, indicating that the $L^\infty$ norm of $f$ regularizes (that is, comes down from $+\infty$) much faster for (\ref{P1}).

In a preprint following the completion of this manuscript \cite{GuSi2023}, the third author and Luis Silvestre prove that the Fisher information for solutions of the Landau-Coulomb equation \eqref{P1} does not increase, and thus solutions starting from smooth data with fast decay must remain smooth for all times. Also recently \cite{Chen2023}, Chen proved that blow up occurs for the equation obtained by increasing the coefficient in the reaction term in \eqref{P2 nondivergence} by an arbitrary positive amount. These results together with Theorem \ref{main thm} raise the interesting and worthwhile question of whether the fast regularization rate in Theorem \ref{main thm} continues to hold for solutions of the full Landau-Coulomb equation (\ref{P2}). Similarly it would be worthwhile to investigate similar regularization rates for the Landau equation for very soft potentials.

Lastly, we clarify some notation. In what follows we will denote by $L^{1}_{m}(\mathbb{R}^{d})$ the space of all $L^1(\mathbb{R}^{d})$ functions such that 
$$
\int_{\mathbb{R}{^d}} |f|(1+ |x|^2)^{m/2}\;dx < +\infty.
$$

The proof of Theorem \ref{main thm} can be divided in two stages. In the first one we show a $L^1 \to L^p$ gain of integrability for $u$, solution to (\ref{P1}). This is possible thanks to the nonlinear Sobolev inequality involving $A[u]$ discovered by Gressmann, Krieger and Strain \cite{GreKriStr2012} and which captures the strong regularization effects of the Dirichlet form associated to $A[u]$ -- this is explained in Section \ref{sec:Lp gain}. The second stage is a proof in the style of De Giorgi-Nash-Moser theory, which we use to obtain the $L^p \to L^\infty$ part of the estimate -- this is the content of Section \ref{sec:De Giorgi}. The combination of these two steps yields (\ref{main result}). Some preparatory lemmas are discussed in the next Section \ref{sec:technical lemmas}.

\section{Some technical lemmas}\label{sec:technical lemmas}

We first recall well-known results on the bounds of the diffusion matrix $A[u]$. For the proof of the following lemma, see for example \cite[Lemma 3.1]{bedrossian2022non}.
\begin{lemma}\label{prop 1}
There exist positive constants $C_{0}$ and $c_{0}$ depending on the dimension $d\geq 3$ such that 
\begin{equation*}
\|A[u]\|_{L^{\infty}(\mathbb{R}^{d})}\; \leq \; C_{0}\|u\|^{\frac{p(d-2)}{d(p-1)}}_{L^{p}(\mathbb{R}^{d})}\|u\|^{\frac{2p-d}{d(p-1)}}_{L^{1}(\mathbb{R}^{d})},\quad p>\frac{d}{2},
\end{equation*}
and
\begin{align*}
    \|{div} A[u]\|_{L^{\infty}(\mathbb{R}^{d})}\; \leq \; c_{0}\|u\|^{\frac{p(d-1)}{d(p-1)}}_{L^{p}(\mathbb{R}^{d})}\|u\|_{L^{1}(\mathbb{R}^{d})}^{\frac{p-d}{d(p-1)}},\quad p>d.
\end{align*}
\end{lemma}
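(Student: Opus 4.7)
The plan is to reduce each estimate to a standard Riesz-potential bound for $u$, obtained by a near/far truncation of the kernel optimized in a single radius parameter $R$. Since $\mathbb{P}(z)$ is an orthogonal projection, each of its entries is bounded in absolute value by one, and the kernel of $A[u]$ is controlled pointwise by the natural Landau-Coulomb weight $c_d|z|^{-(d-2)}$. This already yields
$$
|A[u](x)|\;\leq\;C\!\int_{\mathbb{R}^d}\frac{u(y)}{|x-y|^{d-2}}\,dy.
$$
For the companion bound on $\operatorname{div} A[u]$, one uses the identity $\nabla_z\!\cdot\!\bigl(\mathbb{P}(z)/|z|^{d-2}\bigr)=-(d-1)\,z/|z|^d$, which follows by differentiating $\mathbb{P}(z)=\mathbb{Id}-z\otimes z/|z|^2$ and collecting terms. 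This produces the pointwise bound
$$
|\operatorname{div} A[u](x)|\;\leq\;C\!\int_{\mathbb{R}^d}\frac{u(y)}{|x-y|^{d-1}}\,dy.
$$

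Next, I would handle both integrals simultaneously through a splitting parametrized by a radius $R>0$ and an exponent $s\in\{d-2,d-1\}$. On the far region $\{|x-y|\ge R\}$ the kernel is dominated by $R^{-s}$, giving the trivial contribution $R^{-s}\|u\|_{L^1}$. On the near region, Hölder's inequality with conjugate exponents $p$ and $p'=p/(p-1)$ supplies
$$
\int_{|x-y|<R}\frac{u(y)}{|x-y|^{s}}\,dy\;\leq\;\|u\|_{L^p}\Big(\int_{|z|<R}|z|^{-sp'}\,dz\Big)^{1/p'}.
$$
The radial integral on the right converges precisely when $sp'<d$, which for $s=d-2$ is $p>d/2$ and for $s=d-1$ is $p>d$, exactly matching the hypotheses of the lemma; in that range it evaluates to $c\,R^{d/p'-s}$.

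Finally, summing the two contributions gives
$$
|A[u](x)|+|\operatorname{div}A[u](x)|\;\lesssim\;R^{d/p'-s}\|u\|_{L^p}+R^{-s}\|u\|_{L^1},
$$
and I would balance the two terms by choosing $R^{d/p'}=\|u\|_{L^1}/\|u\|_{L^p}$, which after back-substitution produces
$$
|A[u](x)|\;\lesssim\;\|u\|_{L^p}^{sp'/d}\,\|u\|_{L^1}^{1-sp'/d}.
$$
Using $p'=p/(p-1)$, the case $s=d-2$ gives exponents $p(d-2)/(d(p-1))$ and $(2p-d)/(d(p-1))$, and the case $s=d-1$ gives $p(d-1)/(d(p-1))$ and $(p-d)/(d(p-1))$, matching the two inequalities of the lemma. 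The whole argument is essentially routine interpolation for Riesz-type potentials; the only mildly delicate step is the divergence identity for $\mathbb{P}(z)/|z|^{d-2}$, where one has to keep track of the cancellations between $\partial_i(|z|^{-(d-2)})$ and the divergence of $z\otimes z/|z|^d$ to recover the precise $|z|^{-(d-1)}$ homogeneity rather than a worse singularity.
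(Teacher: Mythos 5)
Your argument is correct and is the standard Riesz-potential proof of these bounds; the paper itself does not supply a proof, stating the lemma only as a recollection of ``well-known'' estimates, so there is no internal proof to compare against. Your near/far splitting with an optimized radius $R$, Hölder in the near region with exponent $p$ and the trivial $R^{-s}\|u\|_{L^1}$ bound in the far region, followed by balancing via $R^{d/p'}=\|u\|_{L^1}/\|u\|_{L^p}$, reproduces exactly the stated exponents, and the divergence identity $\nabla\!\cdot\!\bigl(\mathbb{P}(z)|z|^{2-d}\bigr)=-(d-1)z|z|^{-d}$ is correct (the $-(d-2)$ from $\partial_i |z|^{2-d}$ combines with the $-1$ from $\partial_j(z_iz_j|z|^{-d})$). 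The integrability thresholds $sp'<d$, namely $p>d/2$ for $s=d-2$ and $p>d$ for $s=d-1$, match the lemma's hypotheses, and the bound $|\mathbb{P}_{ij}(z)|\le\|\mathbb{P}(z)\|_{\mathrm{op}}=1$ justifies the pointwise kernel control. One point worth flagging explicitly: the paper's displayed definition of $A[u]$ uses the kernel $\mathbb{P}(x-y)/|x-y|$, which would give Riesz exponent $s=1$ for all $d$ and would \emph{not} produce the stated exponents when $d>3$. Your proof silently replaces this with the dimensionally correct Landau-Coulomb kernel $\mathbb{P}(x-y)/|x-y|^{d-2}$ (so $s=d-2$ and $s=d-1$ for the divergence), which is what the exponents $\frac{p(d-2)}{d(p-1)}$, $\frac{2p-d}{d(p-1)}$, $\frac{p(d-1)}{d(p-1)}$, $\frac{p-d}{d(p-1)}$ in the lemma actually encode; this appears to be a typo in the paper's definition of $A[u]$, and you were right to use the $|z|^{2-d}$ normalization.
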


We will also use the following weighted Sobolev inequality: for $f$ smooth enough and any $1\leq s \leq \frac{2d}{d-2}$ we have
\begin{align}\label{sawyer wheeden bound}
\left(\int_{\mathbb{R}^{d}}|f|^{\frac{2d}{d-2}}\langle x \rangle^{-3d}\;dx\right)^{\frac{d-2}{d}}\leq c_{1}\int_{\mathbb{R}^{d}}|\nabla f|^{2}\langle x \rangle^{-d}\;dx+c_{2}\left(\int_{\mathbb{R}^{d}}|f|^{s}\;dx\right)^{2/s}.
\end{align}
Here, $\langle x \rangle := (1+|x|^2)^{1/2}$. The derivation of (\ref{sawyer wheeden bound}) follows the steps in \cite{GoGuLo23}; where the authors prove it for the case $d=3$. Furthermore, the constants $c_{1}$ and $c_{2}$ depend only on the dimension $d$. 
We apply (\ref{sawyer wheeden bound}) in order to prove the following interpolation inequality: 

\begin{lemma}\label{Holde-interpolation ineq}
Let $p>1$ and $q$ such that $p+\frac{2}{d}<q< p\left(1+ \frac{2}{d}\right)$. Let $m$ be defined as 
$$
m:=\frac{3d(d-2)(p-1)}{(d+2)p-dq}.
$$ 
For any $g$ smooth function the following bound holds:
    \begin{align}\label{lemma4.4 ineq} 
    \|g\|^{q}_{L^{q}(\mathbb{R}^{d})}\leq C\left(\|\langle \cdot\rangle^{-\frac{d}{2}}\nabla g^{\frac{p}{2}}\|^{2}_{L^{2}(\mathbb{R}^{d})}+\|g\|_{L^{p}(\mathbb{R}^{d})}^{p}\right)\|g\|_{L^{p}(\mathbb{R}^{d})}^{p\left(\frac{q-p-\frac{2}{d}}{p-1}\right)}\|g\langle\cdot\rangle^{m}\|_{L^{1}(\mathbb{R}^{d})}^{\frac{(d+2)p-dq}{d(p-1)}}. 
    \end{align}
\end{lemma}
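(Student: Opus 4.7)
The plan is to interpolate $\|g\|_{L^q}^q$ against three ingredients: a high-integrability term furnished by the weighted Sobolev inequality (\ref{sawyer wheeden bound}), the $L^p$-norm, and the weighted moment $\|g\langle\cdot\rangle^{m}\|_{L^1}$. Only two elementary tools enter---the inequality (\ref{sawyer wheeden bound}) applied to $f = g^{p/2}$, then a three-factor H\"older inequality with carefully chosen powers and weights. The precise value of $m$ and of the exponents appearing in (\ref{lemma4.4 ineq}) will be forced by the algebraic requirements that make the H\"older splitting consistent with a full cancellation of the extra weight.

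Setting $f=g^{p/2}$ and $s=2$ in (\ref{sawyer wheeden bound}), and abbreviating $r:=pd/(d-2)$ and $E:=\|\langle\cdot\rangle^{-d/2}\nabla g^{p/2}\|_{L^{2}}^{2}+\|g\|_{L^{p}}^{p}$, one obtains
$$
\int_{\mathbb{R}^{d}}|g|^{r}\langle x\rangle^{-3d}\,dx \le C\,E^{d/(d-2)}.
$$
Then I decompose pointwise
$$
|g|^q = \left(|g|\langle x\rangle^{m}\right)^{\alpha_1}\cdot |g|^{\alpha_2}\cdot \left(|g|^{r}\langle x\rangle^{-3d}\right)^{\alpha_3}
$$
and apply H\"older's inequality with conjugate exponents $1/\alpha_1$, $p/\alpha_2$, $1/\alpha_3$, so that each factor produces exactly one of the norms on the right-hand side of (\ref{lemma4.4 ineq}). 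Three conditions must hold simultaneously: (i) the powers of $|g|$ sum to $q$, i.e.\ $\alpha_1+\alpha_2+r\alpha_3=q$; (ii) the weights cancel, i.e.\ $m\alpha_1=3d\alpha_3$; and (iii) the H\"older exponents are conjugate, i.e.\ $\alpha_1+\alpha_2/p+\alpha_3=1$.

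The requirement that $E$ appear to the first power in the final bound forces $\alpha_3=(d-2)/d$, after which (i) becomes $\alpha_1+\alpha_2=q-p$ (using that $r\alpha_3=p$) and (iii) becomes $\alpha_1+\alpha_2/p=2/d$. Solving this linear system yields
$$
\alpha_1 = \frac{(d+2)p-dq}{d(p-1)}, \qquad \alpha_2 = \frac{p(q-p-2/d)}{p-1},
$$
and condition (ii) then dictates $m=3d\alpha_3/\alpha_1=\tfrac{3d(d-2)(p-1)}{(d+2)p-dq}$, which is the value prescribed in the statement. The range $p+\tfrac{2}{d}<q<p(1+\tfrac{2}{d})$, together with $p>1$ and $d\ge 3$, is precisely what is needed to guarantee $\alpha_1,\alpha_3\in(0,1)$ and $\alpha_2\in(0,p)$, so that the three conjugate exponents $1/\alpha_1$, $p/\alpha_2$, $1/\alpha_3$ are all admissible. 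The only real difficulty is the algebraic bookkeeping of the three exponent conditions---conceptually nothing deeper than a weighted Sobolev embedding and a three-factor H\"older inequality enters---and once the decomposition is written down, both the identity of $m$ and the exponents in (\ref{lemma4.4 ineq}) emerge directly.
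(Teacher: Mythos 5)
Your proof is correct and essentially follows the paper's route: both reduce to the weighted Sobolev bound (\ref{sawyer wheeden bound}) applied to $f=g^{p/2}$ with $s=2$, followed by a Hölder interpolation whose exponents are forced by the same three algebraic constraints. The only difference is organizational — the paper performs two nested two-factor Hölder inequalities to establish (\ref{star ineq}) before invoking the Sobolev bound, while you invoke the Sobolev bound first and close with a single three-factor Hölder — but the exponent bookkeeping and the derived value of $m$ are identical.
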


\begin{proof}
    We first establish the following interpolation inequality
\begin{align}\label{star ineq}
    \|g\|^{q}_{L^{q}}\leq \|\langle \cdot\rangle^{-\frac{3(d-2)}{p}}\;g\|_{L^{\frac{dp}{d-2}}}^{p}\;\|g\|_{L^{p}}^{p\left(\frac{q-p-\frac{2}{d}}{p-1}\right)}\|g\;\langle\cdot\rangle^{m}\|_{L^{1}}^{\frac{(d+2)p-dq}{d(p-1)}},
\end{align}
that holds for any $p+\frac{2}{d}<q< p\left(1+ \frac{2}{d}\right)$ and $m=\frac{3d(d-2)(p-1)}{(d+2)p-dq}$. 
The lemma follows easily once we prove (\ref{star ineq}): use (\ref{sawyer wheeden bound}) with $f=g^{\frac{p}{2}}$ and $s=2$ to bound the first term on the right hand side of (\ref{star ineq}) and get 
\begin{align*}
    \|g\|^{q}_{L^{q}}\leq C\left(\|\langle \cdot\rangle^{-\frac{d}{2}}\nabla g^{\frac{p}{2}}\|^{2}_{L^{2}}+\|g\|_{L^{p}}^{p}\right)\|g\|_{L^{p}}^{p\left(\frac{q-p-\frac{2}{d}}{p-1}\right)}\|g\langle\cdot\rangle^{m}\|_{L^{1}}^{\frac{(d+2)p-dq}{d(p-1)}}. 
\end{align*}

Next, to show (\ref{star ineq}) we start with a weighted interpolation 
\begin{align*}
    \|g\|^{q}_{L^{q}(\mathbb{R}^{d})}
\leq\left(\int_{\mathbb{R}^{d}}g^{\frac{dp}{d-2}}\langle x \rangle^{-\frac{dp\alpha}{\theta q(d-2)}}\;dx\right)^{\frac{\theta q(d-2)}{dp}}\left(\int_{\mathbb{R}^{d}}g^{r}\langle x \rangle^{\frac{r\alpha}{(1-\theta)q}}\;dx\right)^{\frac{(1-\theta)q}{r}},
\end{align*}
with $\alpha$, $\theta$ and $r$ satisfing
\begin{align*}
     \left\{ \begin{array}{rcl}
\frac{\theta q(d-2)}{dp}+\frac{(1-\theta)q}{r}=1,& \\ \frac{\theta q(d-2)}{dp}=\frac{d-2}{d}, & \\ 
\frac{dp\alpha}{\theta q(d-2)}=3d.
\end{array}\right.
\end{align*}
The above system has solutions  $\alpha=3(d-2)$, $\frac{(1-\theta)q}{r}=\frac{2}{d}$, $r=\frac{d}{2}(q-p)$, and $\theta=\frac{p}{q}$, which yield 
\begin{align*}
    \int g^{q}\;dx&\leq \left(\int g^{\frac{dp}{d-2}}\langle \cdot\rangle^{-3d}\;dx\right)^{\frac{d-2}{d}}\left(\int g^{\frac{d}{2}(q-p)}\langle \cdot \rangle^{\frac{3d(d-2)}{2}}\;dx\right)^{\frac{2}{d}}.
\end{align*}
Let us focus on the last term: once more, use H\"older's inequality and get
\begin{align}\label{split estimate}
    \begin{split}
    \left(\int g^{\frac{d}{2}(q-p)}\langle\cdot\rangle^{\frac{3d(d-2)}{2}}dx\right)^{\frac{2}{d}}&=\left(\int g^{\alpha}g^{\frac{d}{2}(q-p)-\alpha}\;\langle\cdot\rangle^{\frac{3d(d-2)}{2}}dx\right)^{\frac{2}{d}}\\
    &\leq \left[\left(\int g^{p}dx\right)^{\frac{\alpha}{p}}\left(\int g^{(\frac{d}{2}(q-p)-\alpha)\beta}\langle\cdot\rangle^{\frac{3d(d-2)}{2}\beta}dx\right)^{\frac{1}{\beta}}\right]^{\frac{2}{d}},
    \end{split}
\end{align}
where $\beta:=\frac{p}{p-\alpha}$. We choose $\alpha$ such that $(\frac{d}{2}(q-p)-\alpha)\beta=1$, which implies 
\begin{align*}
    \alpha=\left(\frac{d}{2}(q-p)-1\right)\frac{p}{p-1}.
\end{align*}
Note that $\alpha>0$; hence $\beta>1$, requires $q> p+\frac{2}{d}$. Since 
\begin{align*}
    \beta=\frac{2(p-1)}{(d+2)p-dq},
\end{align*}
 and as $p/\alpha$ has to be strictly greater than $1$, we require  $q<\left(\frac{d+2}{d}\right)p$. 
Substitution of $\alpha$ and $\beta$  in (\ref{split estimate}) yields
\begin{align}\label{a third of Holder estimate}
    \left(\int g^{\frac{d}{2}(q-p)}\langle\cdot\rangle^{\frac{3d(d-2)}{2}}dx\right)^{\frac{2}{d}}&\leq \left(\int g^{p}\;dx\right)^{\frac{q-p-\frac{2}{d}}{p-1}}\left(\int g\langle x \rangle^{m}\right)^{\frac{(d+2)p-dq}{d(p-1)}},
\end{align}
with $m=\frac{3d(d-2)(p-1)}{(d+2)p-dq}$. This proves (\ref{star ineq}) and finishes the proof.
\end{proof}

\begin{remark}\label{remark on constraint m}
    The condition $p+\frac{2}{d}<q<\frac{d+2}{d}p$ indicates that $m:=\frac{3d(d-2)(p-1)}{(d+2)p-dq}$ is such that $m>\frac{3d(d-2)}{2}$.
\end{remark}

\section{$L^1 \to L^{p}$ gain of integrability}\label{sec:Lp gain}

The next theorem shows a $L^1 \to L^{p}$ gain of integrability for solutions to (\ref{P1}). The proof follows almost directly from the nonlineal Poincare's inequality 
\begin{align} \label{GKSineq}
\int_{\mathbb{R}^d} u^{p+1}\;dx \le \left( \frac{p+1}{p}\right)^2 \int_{\mathbb{R}^d} A[u] |\nabla u^{p/2}|^2\;dx,
\end{align}
first proved in \cite{GreKriStr2012}. The gain of integrability we obtain is much faster than the one of the solution to the heat equation, which is of the order of 
$ \frac{1}{t^{\frac{d}{2}\left(1-\frac{1}{p}\right)}}$.

\begin{theorem}\label{Lp thm}
Let $u(t,x)$ be a solution to (\ref{P1}). For any $p>1$ and for all $T>0$, we have
\begin{align*}
    \|u\|_{L^{\infty}(t, \;T,\; L^{p}(\mathbb{R}{^d}))}\leq \frac{c}{t^{1-\frac{1}{p}}},
\end{align*}
with $c$ a constant depending only on $p$ and $\|u_{\text{in}}\|_{L^{1}(\mathbb{R}^{d})}$.
\end{theorem}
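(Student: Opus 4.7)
The plan is a standard energy-estimate argument, with the key nonlinear ingredient being the weighted Poincar\'e inequality (\ref{GKSineq}) of Gressman--Krieger--Strain. First, I would differentiate $Y(t):=\int u^p\,dx$ in time and use the equation (\ref{P1}) together with integration by parts:
\begin{align*}
Y'(t) = p\int u^{p-1}\,\mathrm{div}(A[u]\nabla u)\,dx = -p(p-1)\int u^{p-2} A[u]\nabla u\cdot\nabla u\,dx.
\end{align*}
The chain rule identity $|\nabla u^{p/2}|^{2} = \tfrac{p^{2}}{4}u^{p-2}|\nabla u|^{2}$ converts this into
\begin{align*}
Y'(t) = -\frac{4(p-1)}{p}\int A[u]|\nabla u^{p/2}|^{2}\,dx.
\end{align*}

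Next, I would feed this into the GKS inequality (\ref{GKSineq}) directly, which produces
\begin{align*}
Y'(t) \le -\frac{4p(p-1)}{(p+1)^{2}}\int u^{p+1}\,dx.
\end{align*}
Since (\ref{P1}) is in divergence form, $M:=\|u(t)\|_{L^{1}} = \|u_{\mathrm{in}}\|_{L^{1}}$ is conserved. I would then interpolate $L^{p}$ between $L^{1}$ and $L^{p+1}$, which gives $\|u\|_{L^{p}}\le \|u\|_{L^{1}}^{1/p^{2}}\|u\|_{L^{p+1}}^{1-1/p^{2}}$, and after rearranging,
\begin{align*}
\int u^{p+1}\,dx \ge M^{-1/(p-1)}\,Y(t)^{p/(p-1)}.
\end{align*}

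Combining the two bounds yields the autonomous differential inequality $Y'(t)\le -c(p,M)\,Y(t)^{p/(p-1)}$, which is of Bernoulli type and integrates to $Y(t)\le C(p,M)\,t^{-(p-1)}$ regardless of the initial size of $Y(0)$. Taking the $p$-th root recovers the claimed rate $\|u(t)\|_{L^{p}}\le c\,t^{-(1-1/p)}$.

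I do not anticipate a genuine obstacle here beyond technicalities: the formal manipulations above are an a priori estimate, and one needs sufficient regularity and decay of $u$ at infinity to justify the integration by parts and the application of (\ref{GKSineq}). In practice this would be handled by working with a suitable approximation scheme (or by invoking existing short-time smoothness results), then passing to the limit. The essential point is that the nonlinear diffusion $A[u]$, via the GKS inequality, supplies direct control of $\int u^{p+1}\,dx$ by the dissipation, which is strictly stronger than the Gagliardo--Nirenberg estimate available for the linear heat equation and is precisely what produces the faster $t^{-(1-1/p)}$ rate instead of the heat-equation rate $t^{-\frac{d}{2}(1-1/p)}$.
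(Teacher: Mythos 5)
Your proposal is correct and follows essentially the same route as the paper: test with $u^{p-1}$, apply the GKS weighted Poincar\'e inequality (\ref{GKSineq}) to bound the dissipation from below by $\int u^{p+1}\,dx$, interpolate $L^p$ between $L^1$ and $L^{p+1}$ using conservation of mass, and integrate the resulting Bernoulli-type differential inequality $y' \le -C y^{p/(p-1)}$. The only difference is cosmetic: the paper explicitly records the closed-form bound $y \le \bigl(y_0^{-1/(p-1)} + \tfrac{C}{p-1}t\bigr)^{-(p-1)}$ and then drops the $y_0$ term, whereas you state the initial-data-free decay directly, but the content is identical.
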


\begin{proof} Multiply  (\ref{P1}) by $\varphi:=u^{p-1}$ and integrate the resulting equation in $\mathbb{R}{^d}$.  Integration by parts yields
\begin{equation*}
   \partial_{t}\int u^{p}\;dx=-\frac{4(p-1)}{p}\int \left\langle A[u]\nabla u^{\frac{p}{2}},\nabla u^{\frac{p}{2}}\right\rangle\;dx.
\end{equation*}
Inequality (\ref{GKSineq}) implies 
\begin{equation}\label{ode ineq}
\partial_{t}\int u^{p}(x)\;dx\leq -\frac{4p(p-1)}{(p+1)^{2}}\int u^{p+1}(x)\;dx.
\end{equation}
Combining the interpolation inequality 
\begin{equation*}
    \|u\|_{L^{p}}\leq \|u\|^{\theta}_{L^{1}}\|u\|^{1-\theta}_{L^{p+1}}, \quad  \theta=\frac{1}{p^{2}},
\end{equation*}
with (\ref{ode ineq}) yields
\begin{equation*}
    \partial_{t}\|u\|^{p}_{L^{p}}\leq -C\|u\|^{\frac{p^{2}}{(p-1)}}_{L^{p}},
\end{equation*}
with $C=\frac{4p(p-1)}{(p+1)^{2}} \|u_{\text{in}}\|^{-\frac{1}{p-1}}_{L^{1}}$. Note that the $L^{1}$-norm is conserved.
Define $y:=\|u\|_{L^{p}}^{p}$;  the solution to the differential inequality
\begin{equation*}
    y^{\prime}\leq -C y^{\frac{p}{p-1}},
\end{equation*}
has the bound
\begin{equation*}
    y \le \frac{1}{\left(y_{0}^{-\frac{1}{p-1}}+\frac{C}{p-1}t\right)^{p-1}}.
\end{equation*}
This implies that 
\begin{align*}
    \|u\|_{L^{\infty}(t,
     \;T,\;L^{p}(\mathbb{R}^{d}))}&\leq \left(\frac{(p-1)}{C}\right)^{1-\frac{1}{p}} \frac{1}{t^{1-\frac{1}{p}}}, 
\end{align*}
and this finishes the proof. 
\end{proof}

We also have the following moment estimate: 

\begin{lemma}\label{Lemma 4.1}
 Let $u(t, x)$ be a smooth solution to  (\ref{P1}) in the time interval $[0,T]$ with initial data $u_{\text{in}}\in L^{1}_{m}(\mathbb{R}^{d})$ for some $m\geq 2$. Then there exists a constant $c$ that only depends on $T$ and the $L^{1}_{m}$-norm of  $u_{\text{in}}$ such that
\begin{align*}
    \sup_{t\in [0, T]}\|u(t, x)\|_{L^{1}_{m}(\mathbb{R}^{d})}\leq c.
\end{align*}
\end{lemma}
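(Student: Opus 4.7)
The plan is to test the equation against the weight $\langle x\rangle^{m}$ and derive a differential inequality that can be closed by Gronwall. Concretely, setting $M_m(t):=\int u(t,x)\langle x\rangle^m\,dx$ and multiplying (\ref{P1}) by $\langle x\rangle^m$, two integrations by parts (using symmetry of $A[u]$) yield
\begin{equation*}
\frac{d}{dt}M_m(t)=\int_{\mathbb{R}^d} u\bigl[\,A[u]:D^2\langle x\rangle^m+(\mathrm{div}\,A[u])\cdot\nabla\langle x\rangle^m\,\bigr]\,dx,
\end{equation*}
where the boundary terms at infinity are controlled by the assumed smoothness and decay of $u$.

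Next, I would use the pointwise estimates $|\nabla\langle x\rangle^m|\le C_m\langle x\rangle^{m-1}$ and $|D^2\langle x\rangle^m|\le C_m\langle x\rangle^{m-2}$. Since $\langle x\rangle\ge 1$ and $m\ge 2$, both weights are bounded by $\langle x\rangle^m$, giving
\begin{equation*}
\frac{d}{dt}M_m(t)\le C_m\bigl(\,\|A[u](t)\|_{L^\infty}+\|\mathrm{div}\,A[u](t)\|_{L^\infty}\,\bigr)\,M_m(t).
\end{equation*}
To make this useful I need the bracket to be integrable in $t$ over $[0,T]$. This is where Lemma \ref{prop 1} combined with Theorem \ref{Lp thm} enters: choosing $p=d$ in the bound for $\|A[u]\|_\infty$ and $p=2d$ in the bound for $\|\mathrm{div}\,A[u]\|_\infty$, and substituting $\|u(t)\|_{L^p}\le c\,t^{-(1-1/p)}$, a short computation gives
\begin{equation*}
\|A[u](t)\|_{L^\infty}\le c\,t^{-(d-2)/d},\qquad \|\mathrm{div}\,A[u](t)\|_{L^\infty}\le c\,t^{-(d-1)/d},
\end{equation*}
with constants depending only on $\|u_{\mathrm{in}}\|_{L^1}$. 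Since $d\ge 3$, both exponents are strictly less than $1$, so the singularities at $t=0$ are integrable on $[0,T]$.

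Gronwall's inequality then gives $M_m(t)\le M_m(0)\exp(C_{T,d}\,\|u_{\mathrm{in}}\|_{L^1}^{\cdots})$, which is the desired estimate. The one step that requires genuine care (and is the main obstacle) is the $t\to 0^+$ integrability of $\|A[u]\|_\infty$ and $\|\mathrm{div}\,A[u]\|_\infty$: Theorem \ref{Lp thm} produces an $L^p$ bound that blows up as $t\downarrow 0$, and the viability of the whole scheme hinges on this blowup being compensated by the gain coming from the Sobolev-type exponents in Lemma \ref{prop 1}. The remaining work is purely routine: justifying the integration by parts in the smooth class, tracking how the final constant depends on $T$, and noting that the argument does not require $m>3d(d-2)$ (the larger $m$ threshold of Theorem \ref{main thm} is needed for the $L^1\to L^p$ and $L^p\to L^\infty$ steps, not here).
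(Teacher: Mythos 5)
Your proposal is correct and follows essentially the same route as the paper: test the equation against a moment weight, integrate by parts twice, invoke Lemma~\ref{prop 1} together with the $L^1\to L^p$ decay of Theorem~\ref{Lp thm}, and observe that the resulting singularities $t^{-(d-2)/d}$ and $t^{-(d-1)/d}$ at $t=0^+$ are integrable, so a differential inequality closes the argument. (Your remark that this is ``the main obstacle'' is exactly the point the paper is exploiting.) The only difference is organizational: the paper proves $m=2$ first, keeping the zero-order piece $\int u\,\mathrm{Tr}(A[u])\,dx$ as an inhomogeneous forcing term $Ct^{-(d-2)/d}$ and solving the resulting inhomogeneous ODE explicitly, then iterating to higher $m$; you instead handle general $m$ in one shot by absorbing the $\langle x\rangle^{m-2}$ and $\langle x\rangle^{m-1}$ weights into $\langle x\rangle^m$ (valid since $\langle x\rangle\geq1$, $m\ge2$) to get a homogeneous Gronwall inequality. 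Your fixed choices $p=d$, $p=2d$ are a harmless specialization—the exponents on $t$ are independent of $p$ within the admissible ranges, as you can check. Both are valid; your version is slightly leaner, while the paper's keeps a sharper track of which parts of the bound carry the $L^1_m$ weight.
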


\begin{proof}
We start with $m=2$. Testing with  $\phi=(1+|x|^{2})$ and integrating by parts yield
\begin{align*}
    \partial_{t}\int u(1+|x|^{2})dx
    &\le 4\int u\;|x| |\nabla A[u]|\;dx+4d\int u\;\text{Tr}(A[u])\;dx\\
&=:\mathcal{J}_{1}+\mathcal{J}_{2}.
\end{align*}
Let us first estimate $\mathcal{J}_{2}$. 
Applying the first estimate of Lemma \ref{prop 1} to $\mathcal{J}_{2}$, we get 
\begin{align}\label{new estimate}
    \mathcal{J}_{2}&\leq C_{0}\|u\|^{\frac{p(d-2)}{(p-1)d}}_{L^{p}(\mathbb{R}^{d})}\|u\|^{\frac{2p-d}{d(p-1)}+1}_{L^{1}(\mathbb{R}^{d})}.
\end{align}
Then an application of Theorem \ref{Lp thm} to the  $L^{p}$-norm of (\ref{new estimate}), gives
\begin{align}
    \mathcal{J}_{2}\lesssim \frac{1}{t^{1-\frac{2}{d}}}\|u\|_{L^{1}(\mathbb{R}^{d})}^{\frac{2p-d}{d(p-1)}+1}.
\end{align}
Next, we estimate $\mathcal{J}_{1}$. We have
\begin{align*}
    \mathcal{J}_{1}
    & \leq \|\nabla A[u]\|_{L^{\infty}}\int u\;(1+|x|^{2})\;dx.
\end{align*}
Apply once more Lemma \ref{prop 1} and Theorem \ref{Lp thm} to get
\begin{align*}
    \mathcal{J}_{1}& \lesssim \frac{1}{t^{1-\frac{1}{d}}} \int u (1+|x|^{2})\;dx.
\end{align*}
Gathering the estimates of $\mathcal{J}_{1}$ and $\mathcal{J}_{2}$ together, we acquire the bound 
\begin{align*}
    \partial_{t}\int u(1+|x|^{2})dx&=\mathcal{J}_{1}+\mathcal{J}_{2}\\
    &\leq \frac{c}{t^{\frac{d-1}{d}}}\int u (1+|x|^{2})\;dx+\frac{C}{t^\frac{d-2}{d}},
\end{align*}
where $c:=\|u\|^{\frac{p-d}{d(p-1)}}_{L^{1}(\mathbb{R}^{d})}$ and $C:=\|u\|^{\frac{2p-d}{d(p-1)}+1}_{L^{1}}$.
The last inequality is equivalent to the differential inequality 
\begin{align}\label{new ODE}
    y^{\prime}\leq \frac{c}{t^{\frac{d-1}{d}}}y+\frac{C}{t^{\frac{d-2}{d}}},
\end{align}
which, after multiplying by $\mu(s)=e^{-ds^{\frac{1}{d}}}$,  reduces to
\begin{align*}
    (y\mu(s))^{\prime}\leq\mu(s)s^{-\frac{d-2}{d}},
\end{align*}
and has solution:
\begin{align*}
y(t)=e^{dt^{1/d}}\left\{\int_{0}^{t}e^{-ds^{1/d}}s^{\frac{2-d}{d}}ds+y_{0}e^{-dt^{\frac{1}{d}}}\right\}.
\end{align*}
Applying the same argument iteratively, we can get the estimate for any $m>2$.
\end{proof}

\begin{remark}\label{moment bounds}
    Thanks to the bound on the second moments from Lemma \ref{Lemma 4.1}, the conservation of mass and the decay of entropy, the matrix $A[u]$ satisfies the following ellipticity condition:
    \begin{align}\label{entropy bound}
        A[u](x,t)\geq \frac{c(T)}{\langle x \rangle^{d}}\quad \text{for any} \quad x\in \mathbb{R}{^d}, \; t\in[0,T].
    \end{align}
\end{remark}

\section{$L^1 \to L^{\infty}$ gain of integrability}\label{sec:De Giorgi}
In this section we first show the $L^p \to L^{\infty}$ gain of integrability for solutions to (\ref{P1}). This, combined with the estimate of Theorem \ref{Lp thm} will conclude the proof of Theorem \ref{main thm}. We follow a modification of the De Giorgi iteration previously used in \cite{GoGuLo23} and \cite{GoLo23}. We start with a technical lemma. Let $M>0$ and $t>0$; for each $k\in \mathbb{N}$, define 
$$
C_{k}:=M(1-2^{-k}), \quad T_{k}:=\frac{t}{2}\left(1-\frac{1}{2^{k}}\right). 
$$
Note that $M$ is considered constant with respect to $t$ once $t$ has been fixed.

We denote with $(u-c)_{+}$ the maximum between $0$ and $(u-c)$.

\begin{lemma}\label{lemma Ck bound}
Let $p>d/2$, $\gamma>0$ defined as 
\begin{align*}
    \gamma=-1+\frac{2}{d}p-\frac{3}{m}(d-2)(p-1),
\end{align*}
and $m\ge2$ such that 
$$
m>\frac{3d(d-2)}{2} \max\left\{1,\frac{p-1}{p-\frac{d}{2}}\right\}.
$$
For each  $k\geq 1$ we have the bound

    \begin{align*}
    \int_{\mathbb{R}^{d}}(u-C_{k})^{p}_{+}dx
    \leq \left(\frac{c_0 2^{k}}{M}\right)^{1+\gamma}
    &\left(\|\langle \cdot \rangle^{-d/2}\nabla (u-C_{k-1})^{\frac{p}{2}}_{+}\|^{2}_{L^{2}}+\|(u-C_{k-1})_{+}\|^{p}_{L^{p}}\right)\\
    &\cdot \|(u-C_{k-1})_{+}\|^{p\left(\frac{2}{d}-\frac{3}{m}(d-2)\right)}_{L^{p}}\|(u-C_{k-1})_{+}\|^{\frac{3}{m}(d-2)}_{L^{1}_{m}},
\end{align*}
with $c_{0}$ dimensionless constant. 
\end{lemma}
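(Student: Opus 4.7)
The plan is a standard De Giorgi level-set trick fed into the interpolation inequality of Lemma \ref{Holde-interpolation ineq}. The two ingredients interact very cleanly once one picks the correct exponent $q=p+1+\gamma$.

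First I would exploit the algebraic identity $C_k-C_{k-1}=M 2^{-k}$, which yields the pointwise comparison
\begin{align*}
(u-C_k)_+^p \;\le\; (u-C_{k-1})_+^p\,\mathbf{1}_{\{u>C_k\}}
\;\le\; \left(\frac{2^{k}}{M}\right)^{1+\gamma}(u-C_{k-1})_+^{\,p+1+\gamma},
\end{align*}
since on $\{u>C_k\}$ we have $(u-C_{k-1})_+ > M 2^{-k}$, and off this set the left-hand side is zero. Integrating,
\begin{align*}
\int_{\mathbb{R}^d}(u-C_k)_+^p\,dx \;\le\; \left(\frac{2^{k}}{M}\right)^{1+\gamma}\int_{\mathbb{R}^d}(u-C_{k-1})_+^{q}\,dx,\qquad q:=p+1+\gamma.
\end{align*}

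Next I would apply Lemma \ref{Holde-interpolation ineq} with $g=(u-C_{k-1})_+$ and this particular $q$. The key arithmetic to verify (once) is that
\begin{align*}
q \;=\; p\Bigl(1+\tfrac{2}{d}\Bigr)-\tfrac{3(d-2)(p-1)}{m},\qquad (d+2)p-dq \;=\; \tfrac{3d(d-2)(p-1)}{m},
\end{align*}
so that the weight $m$ appearing in Lemma \ref{Holde-interpolation ineq} coincides with the one given here, and the two Lebesgue exponents produced by that lemma simplify exactly to $p\bigl(\tfrac{2}{d}-\tfrac{3(d-2)}{m}\bigr)$ and $\tfrac{3(d-2)}{m}$, matching the powers in the statement.

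Finally I would check that the hypothesis $m>\tfrac{3d(d-2)}{2}\max\{1,(p-1)/(p-d/2)\}$ is exactly what makes both admissibility conditions hold: the inequality $m>\tfrac{3d(d-2)}{2}$ is equivalent to $q>p+\tfrac{2}{d}$ (the other bound $q<(1+\tfrac{2}{d})p$ is automatic since we subtract a positive quantity from $(1+\tfrac{2}{d})p$), while $m>\tfrac{3d(d-2)}{2}\cdot\tfrac{p-1}{p-d/2}$ is equivalent to $\gamma>0$, which also forces $p>d/2$. Since for $d\ge 3$ one has $(p-1)/(p-d/2)>1$, the second condition dominates and recovers the stated hypothesis. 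Absorbing the dimensional constant from Lemma \ref{Holde-interpolation ineq} into $c_0$ gives the desired bound.

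The main obstacle is purely bookkeeping: choosing $q=p+1+\gamma$ so that the $(d+2)p-dq$ denominator in Lemma \ref{Holde-interpolation ineq} is matched to the prescribed $m$, and then checking that the $q$ so produced lies in the admissible interval $\bigl(p+\tfrac{2}{d},(1+\tfrac{2}{d})p\bigr)$. There is no new analytic input beyond the interpolation lemma and the level-set comparison.
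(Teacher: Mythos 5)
Your proposal is correct and follows essentially the same route as the paper: the level-set comparison $(u-C_k)_+^p\le\bigl(\tfrac{2^k}{M}\bigr)^{1+\gamma}(u-C_{k-1})_+^{p+1+\gamma}$, then Lemma~\ref{Holde-interpolation ineq} with $q=p+1+\gamma$, then the constraint bookkeeping. The paper reaches the pointwise inequality through a slightly more circuitous route (bounding $\mathbbm{1}_{\{u\ge C_k\}}$ by a power of $(u-C_{k-1})_+/(C_k-C_{k-1})$ and choosing $a=(1+\gamma)/p$), but that is cosmetically different, not substantively so.
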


\begin{proof}
Observe that $0\leq C_{k-1}<C_{k}$. From this we have 
\begin{align}\label{Ck estimate}
    0\leq (u-C_{k})_{+}\leq (u-C_{k-1})_{+}.
\end{align}
Moreover $u-C_{k-1}=u-C_{k}+C_{k}-C_{k-1}$. Dividing by $C_{k}-C_{k-1}$ we acquire on the set $\{u\geq C_{k}\}$,
\begin{align*}
    \frac{u-C_{k-1}}{C_{k}-C_{k-1}}=\frac{u-C_{k}}{C_{k}-C_{k-1}}+1\geq 1.
\end{align*}
This tells us that
\begin{align*}
    \mathbbm{1}_{\{u-C_{k}\geq 0\}}\leq \frac{(u-C_{k-1})_{+}}{C_{k}-C_{k-1}}.
\end{align*}
Hence, for any $a>0$ we have 
\begin{align*}
    \mathbbm{1}_{\{u-C_{k}\geq 0\}}\leq \left(\frac{(u-C_{k-1})_{+}}{C_{k}-C_{k-1}}\right)^{a}.
\end{align*}
Multiplying the above inequality by $(u-C_{k})_{+}$ and using (\ref{Ck estimate}), we deduce
\begin{align}\label{u-k ineq}
(u-C_{k})_{+}\leq\frac{(u-C_{k-1})^{1+a}_{+}}{(C_{k}-C_{k-1})^{a}}\quad\text{for any}\quad a>0.
\end{align}
Chose $a=\frac{1+\gamma}{p}$ for some $\gamma>0$ to be defined later. Inequality  (\ref{u-k ineq}) implies 
\begin{align*}
    \int_{\mathbb{R}^{d}}(u-C_{k})^{p}_{+}\;dx&\leq \left(\frac{2^{k}}{M}\right)^{1+\gamma}  \int_{\mathbb{R}^{d}}(u-C_{k-1})^{p+1+\gamma}_{+}\;dx. 
\end{align*}
Lemma \ref{Holde-interpolation ineq} with $q=1+\gamma+p$ yields
\begin{align*}
    \int_{\mathbb{R}^{d}}(u-C_{k})^{p}_{+}\;dx&\leq c_{0}\left(\frac{2^{k}}{M}\right)^{1+\gamma}\left(\left\|\nabla (u-C_{k-1})^{\frac{p}{2}}_{+}\langle\cdot\rangle^{-\frac{d}{2}}\right\|^{2}_{L^{2}}+\|(u-C_{k-1})_{+}\|^{p}_{L^{p}}\right)\\
    &\quad\quad\quad \cdot \|(u-C_{k-1})_{+}\|^{p\left(\frac{\frac{d-2}{d}+\gamma}{p-1}\right)}_{L^{2}}\|(u-C_{k-1})_{+}\|^{\frac{2p-d-d\gamma}{d(p-1)}}_{L^{1}_{m}},
\end{align*}
  with $c_{0}$ dimensionless constant and $m=\frac{3d(d-2)(p-1)}{(d+2)p-d(1+\gamma+p)}$.
  Next, we express $\gamma$ in terms of $m$, and get $\gamma=\frac{2}{d}p-1-\frac{3(d-2)(p-1)}{m}$, which implies, after substitution in the norms,
  \begin{align*}
      \int_{\mathbb{R}^{d}}(u-C_{k})^{p}_{+}\;dx \leq c_0 \left(\frac{2^{k}}{M}\right)^{1+\gamma}
    &\left(\left\|\langle \cdot \rangle^{-d/2}\nabla (u-C_{k-1})^{\frac{p}{2}}_{+}\right\|^{2}_{L^{2}}+\|(u-C_{k-1})_{+}\|^{p}_{L^{p}}\right)\\
    &\cdot \|(u-C_{k-1})_{+}\|^{p\left(\frac{2}{d}-\frac{3}{m}(d-2)\right)}_{L^{p}}\|(u-C_{k-1})_{+}\|^{\frac{3}{m}(d-2)}_{L^{1}_{m}}.
  \end{align*}
The constraint $\gamma>0$ implies $m>\frac{3d(d-2)}{2}\frac{(p-1)}{p-\frac{d}{2}}$. The proof of the lemma is complete after recalling Remark \ref{remark on constraint m}.
\end{proof}

We are now ready to start the De Giorgi iteration. For any $k\ge 1$ let us define the energy $ \mathcal{E}_{k}$ as 
\begin{align*}
    \mathcal{E}_{k}(T_{k+1},t):=\sup_{\tau\in (T_{k+1},t)}\int (u-C_{k})^{p}_{+}(\tau,x)dx+C(p)\int_{T_{k+1}}^{t}\int \langle x \rangle^{-d}\left|\nabla(u-C_{k})^{\frac{p}{2}}_{+}\right|^{2}dxd\tau,
\end{align*}
and  $\mathcal{E}_{0}$ as 
\begin{align}\label{initial energy}
    \mathcal{E}_{0}:=\sup_{(t/4, t)}\int_{\mathbb{R}^{d}} u^{p}\;dx+C(p)\int_{t/4}^{t}\int_{\mathbb{R}^{d}}\langle x\rangle^{-d}\left|\nabla u^{\frac{p}{2}}\right|^{2}\;dx\;d\tau, \quad C(p):=\frac{4(p-1)}{p}.
\end{align}

\begin{lemma}\label{lemma on De Giorgi method}
    Given $p>\frac{d}{2}$, $\gamma=-1+\frac{2}{d}p-\frac{3(d-2)(p-1)}{m}$ and $m>\frac{3d(d-2)}{2} \max\left\{1,\frac{p-1}{p-\frac{d}{2}}\right\}$.  For all $k\geq 1$ we have 
    \begin{align*}
        \mathcal{E}_{k}(T_{k+1},t) \lesssim\frac{1}{tM^{1+\gamma}}\;\mathcal{E}_{k-1}(T_{k},t)^{\left(1+\frac{2}{d}-\frac{3}{m}(d-2)\right)}.    \end{align*}
\end{lemma}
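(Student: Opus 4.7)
The plan is to carry out a standard level-set energy argument: derive a differential-in-time identity for $(u-C_k)_+^p$ from the PDE, invoke the pointwise lower bound on $A[u]$ from Remark~\ref{moment bounds}, and then close the recursion by feeding the interpolation of Lemma~\ref{lemma Ck bound} back in.

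\emph{Step 1 (Energy identity).} I test equation (\ref{P1}) against $(u-C_k)_+^{p-1}$ and integrate by parts over $\mathbb{R}^d$. Using $\nabla (u-C_k)_+^{p/2}=\tfrac{p}{2}(u-C_k)_+^{(p-2)/2}\nabla u\cdot\mathbbm{1}_{\{u>C_k\}}$, this yields
\begin{equation*}
\frac{1}{p}\frac{d}{d\tau}\int(u-C_k)_+^p\,dx + \frac{4(p-1)}{p^2}\int \left\langle A[u]\nabla(u-C_k)_+^{p/2},\nabla(u-C_k)_+^{p/2}\right\rangle dx=0.
\end{equation*}
The ellipticity bound $A[u](x,\tau)\geq c\langle x\rangle^{-d}\mathbb{I}$ from Remark~\ref{moment bounds} then reduces this to
\begin{equation*}
\frac{d}{d\tau}\int(u-C_k)_+^p\,dx + C(p)\int\langle x\rangle^{-d}\bigl|\nabla(u-C_k)_+^{p/2}\bigr|^2 dx\le 0.
\end{equation*}

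\emph{Step 2 (Time averaging).} Integrating this inequality from any $s\in(T_k,T_{k+1})$ up to $\tau\in(T_{k+1},t)$, averaging in $s$ over the interval $(T_k,T_{k+1})$ (whose length is $t/2^{k+1}$), and then taking the supremum in $\tau$ controls both pieces of $\mathcal{E}_k$ simultaneously:
\begin{equation*}
\mathcal{E}_k(T_{k+1},t)\lesssim \frac{2^k}{t}\int_{T_k}^{t}\!\int_{\mathbb{R}^d}(u-C_k)_+^p\,dx\,d\tau.
\end{equation*}
This is the device that produces the $1/t$ prefactor in the target bound. Now I apply Lemma~\ref{lemma Ck bound} pointwise in $\tau$ and integrate the three resulting factors on $(T_k,t)$. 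The bracket $\|\langle\cdot\rangle^{-d/2}\nabla(u-C_{k-1})_+^{p/2}\|_{L^2}^2+\|(u-C_{k-1})_+\|_{L^p}^p$ time-integrates directly to a multiple of $\mathcal{E}_{k-1}(T_k,t)$ by the two pieces of its definition; the $L^p$ factor with fractional exponent $p\bigl(\tfrac{2}{d}-\tfrac{3}{m}(d-2)\bigr)$ is pulled out at its supremum, contributing $\mathcal{E}_{k-1}(T_k,t)^{2/d-3(d-2)/m}$; and the weighted $L^1_m$-norm of $(u-C_{k-1})_+\le u$ is bounded uniformly in time by Lemma~\ref{Lemma 4.1}. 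Multiplying through with the prefactor $(c_0 2^k/M)^{1+\gamma}$ and absorbing all $k$-dependent constants into $\lesssim$ yields the announced recursion.

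\emph{Obstacle.} The care needed is in the bookkeeping of Step~2, namely deciding which of the three factors from Lemma~\ref{lemma Ck bound} is sent under the time integral (producing the linear-in-$\mathcal{E}_{k-1}$ contribution) and which is extracted at its supremum (producing the fractional sub-linear exponent $\tfrac{2}{d}-\tfrac{3}{m}(d-2)$). The hypothesis on $m$ is calibrated precisely so that $\gamma>0$ and the final exponent $1+\tfrac{2}{d}-\tfrac{3}{m}(d-2)$ strictly exceeds $1$; this super-linearity is exactly what will drive the De~Giorgi iteration to close for $M$ chosen large enough in the following step.
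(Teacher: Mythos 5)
Your proof is correct and follows essentially the same route as the paper: test the PDE with $(u-C_k)_+^{p-1}$, use the ellipticity $A[u]\gtrsim\langle x\rangle^{-d}$ from Remark~\ref{moment bounds}, perform the standard time-averaging over $s\in(T_k,T_{k+1})$ with supremum in $\tau$ to control $\mathcal{E}_k$ by $\frac{2^{k}}{t}\int_{T_k}^t\int(u-C_k)_+^p$, then feed in Lemma~\ref{lemma Ck bound} and absorb the fractional $L^p$ factor into the bracket to produce the superlinear exponent $1+\tfrac{2}{d}-\tfrac{3}{m}(d-2)$. The only small imprecision (shared with the paper's own write-up) is the phrase that the bracket ``time-integrates directly to a multiple of $\mathcal{E}_{k-1}$'': the gradient term does, but $\int_{T_k}^t\|(u-C_{k-1})_+\|_{L^p}^p\,d\tau$ must first be bounded by $(t-T_k)\sup\|\cdot\|_{L^p}^p$, which costs a factor comparable to $t$ that is then cancelled by the $1/(T_{k+1}-T_k)$ prefactor; since the whole argument runs on a bounded time interval this is harmless, but worth noting.
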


\begin{proof}
We test  (\ref{P1}) with $(u-C_k)_+^{p-1}$, integrate in $\mathbb{R}^d \times (s,\tau)$ with 
$0\leq T_{k}\leq s\leq T_{k+1}\leq \tau $. After averaging on $s$ between $T_{k}$ and $T_{k+1}$, and taking the supremum of $\tau$ in $(T_{k+1}, t)$ we get 
\begin{align*}
\sup_{\tau\in (T_{k+1},t)}\int (u-C_{k})^{p}_{+}(\tau,x)dx&+C(p)\int_{T_{k+1}}^{t}\int A[u]\left|\nabla(u-C_{k})^{\frac{p}{2}}_{+}\right|^{2}dxds\\
        &\leq \frac{1}{T_{k+1}-T_{k}}\int_{T_{k}}^{t}\int(u-C_{k})^{p}_{+}dx\;ds,
\end{align*}
which can be also written as 
\begin{align}\label{energy bound}
\mathcal{E}_{k}(T_{k+1},T)\leq \frac{1}{T_{k+1}-T_{k}}\int_{T_{k}}^{t}\int(u-C_{k})^{p}_{+}dx\;ds.
\end{align}
Since $(T_{k+1}-T_{k})=\frac{t}{2^{k+2}}$, we apply the integral bound of Lemma \ref{lemma Ck bound} to get 
\begin{align*}
&\mathcal{E}_{k} \lesssim\frac{2^{k+1}}{t }\left(\frac{2^{k}}{M}\right)^{1+\gamma}\sup_{(T_k,t)}\|(u-C_{k-1})_{+}\|_{L^{1}_{m}}^{\frac{3(d-2)}{m}}\sup_{(T_k,t)}\|(u-C_{k-1})_{+}\|_{L^{p}}^{p\left(\frac{2}{d}-\frac{3}{m}(d-2)\right)}\\
&\quad\cdot\left[\sup_{(T_k,t)}\|(u-C_{k-1})_{+}\|_{L^{p}}^{p}+\int_{T_{k}}^{t}\|\langle\cdot\rangle^{-d/2}\nabla(u-C_{k-1})_{+}^{\frac{p}{2}}\|_{L^{2}}^{2}\;ds\right]\\
&\le \frac{2^{k+1}}{t }\left(\frac{2^{k}}{M}\right)^{1+\gamma}\sup_{(T_k,t)}\|(u-C_{k-1})_{+}\|_{L^{1}_{m}}^{\frac{3(d-2)}{m}} \\ 
&\quad\cdot\left[\sup_{(T_k,t)}\|(u-C_{k-1})_{+}\|_{L^{p}}^{p}+\int_{T_{k}}^{t}\|\langle\cdot\rangle^{-d/2}\nabla(u-C_{k-1})_{+}^{\frac{p}{2}}\|_{L^{2}}^{2}\;ds\right]^{1+\frac{2}{d}-\frac{3}{m}(d-2)}\\
& =\frac{C^{k}C_{0}}{tM^{1+\gamma}}\mathcal{E}_{k-1}^{\left(1+\frac{2}{d}-\frac{3}{m}(d-2)\right)},
\end{align*}
with $C_{0}:=\sup_{(0,T)}\|u\|_{L^{1}_{m}}^{\frac{3}{m}(d-2)}$. 
\end{proof}

For simplicity in the notation, we define $\beta_{1}:=\frac{2}{d}-\frac{3}{m}(d-2)$. The inequality of the previous lemma shows that, iteratively,  
\begin{align}\label{ineq on c/T}
\mathcal{E}_{k}&\lesssim\left( \frac{c_0}{t^{\frac{1}{\beta_{1}}} M^{\frac{(1+\gamma)}{\beta_1}}} \mathcal{E}_{0}\right)^{(1+\beta_{1})^{k}}.
\end{align}
Recall the definition of $\mathcal{E}_{0}$:
\begin{align*}
\mathcal{E}_{0}= \sup_{(t/4,t)}\int u^{p}(s,x)\;dx+C(p)\int_{t/4}^{t}\int A[u] \left|\nabla u^{\frac{p}{2}}\right|^{2}\;dx\;ds, \quad C(p):=\frac{4(p-1)}{p}.
\end{align*}
Since
$$
\mathcal{E}_{0} \le c\sup_{(t/4, \infty)} \int u^{p}(s,x)\;dx, 
$$
where $c$ is a dimensionless constant greater than $2$.
 Theorem \ref{Lp thm} implies
 \begin{align*}
\mathcal{E}_{0}\leq \frac{c_{p}}{t^{p-1}},
\end{align*}
where $c_{p}\approx (p-1)^{p-1}$ only depends on $p$ and on the $L^1$-norm of the initial data.  Passing to the limit $k\to +\infty$ in (\ref{ineq on c/T}) we obtain 
$$
u\le M,
$$
provided 
$$
M  \gtrsim \frac{c_p}{t^{1+\varepsilon}}, 
$$
with $\varepsilon = \frac{1-\frac{2}{d}}{1+\gamma}$ and $c_p$ only dependent on the $L^1_m$-norm of the initial data and $p>1$ . Note that $\varepsilon >0$ can be as small as one wishes by choosing $p$ arbitrarily large. To see this, first note that if $p$ is greater than $d-1$ then $ 1< \max\left\{1,\frac{p-1}{p-\frac{d}{2}}\right\} \le 2$. Then, taking $m > 3d(d-2)$, we get 
\begin{align*}
\varepsilon = \frac{1-\frac{2}{d}}{1+\gamma} = \frac{1-\frac{2}{d}}{\frac{2p}{d}- \frac{3(d-2)(p-1)}{m}}\le \frac{d-2}{p+1}.
\end{align*}
This finishes the proof of Theorem \ref{main thm}.

{{
\bibliography{Landau_Diffusion}
\bibliographystyle{plain}
}}

\end{document}